\newtheorem{dummy}{dummy}
\newtheorem{theorem}[dummy]{Theorem}
\newtheorem{remark}[dummy]{Remark}
\def\A{\mathbb A}
\def\P{\mathbb P}
\def\AA{\mathcal A}
\def\BB{\mathcal B}
\def\CC{\mathcal C}
\def\={\;=\;}
\def\bal{\begin{aligned}}
\def\eal{\end{aligned}}
\def\be{\begin{equation}\label}
\def\ee{\end{equation}}
\newcommand{\Sym}{{\rm Sym}}
\newcommand{\Spec}{{\rm Spec}}
\title{On a zeta function of a dg-category}
\author{Sergey Galkin,
Evgeny Shinder}
\begin{document}

\maketitle

\begin{abstract}

We define a zeta-function of a pretriangulated dg-category and investigate
its relationship with the motivic zeta function in the geometric case.

%

\end{abstract}


For a variety $X/k$ 
Kapranov's motivic zeta-function takes values in the Grothendieck ring $K_0(Var/k)$ of varieties over $k$ 
and is defined as
\be{def-mot}
Z_{mot}(X, t) = \sum_{n \ge 0} [\Sym^n(X)] \, t^n.
\ee

By construction it satisfies
\be{mult-kap}
Z_{mot}(X, t) = Z_{mot}(Z, t) \cdot Z_{mot}(U,t)
\ee
for any closed $Z \subset X$ with open complement $U \subset X$.

\medskip

In this short note we introduce another zeta-function, a zeta-function of a pre-triangluated 
dg-category and propose a relation between the two zeta-functions.
Everywhere in this paper $k$ is a field of characteristic zero.
Some of the technical proofs are omitted, details will be given elsewhere.

\medskip

We denote by the Grothendieck ring $K_0(dg-cat/k)$ 
the free abelian group generated by quasi-equivalence 
classes of pretriangulated dg-categories $\CC$
modulo relations
\[
[\CC] = [\AA] + [\BB]
\]
for semi-orthogonal decompositions $\CC = \langle \AA, \BB \rangle$.
See \cite[Section 5]{BLL} for details and the definition of the product on this ring.
Furthermore, symmetric power operations defined by Ganter-Kapranov \cite{GK}
give rise to $\lambda$-operations on $K_0(dg-cat/k)$.

This allows us to define the categorical zeta-function as
\[
Z_{cat}(\CC, t) = \sum_{n \ge 0} [\Sym^n(\CC)] \, t^n \in K_0(dg-cat/k)[[t]].
\]

By construction $Z_{cat}$ is multiplicative for semi-orthogonal decompositions:
\be{mult-cat}
Z_{mot}(\langle \AA, \BB \rangle, t) = Z_{cat}(\AA, t) \cdot Z_{cat}(\BB,t). 
\ee

\medskip

Recall that there exists a motivic measure
\[
\mu_{dg}: K_0(Var/k) \to K_0(dg-cat/k) 
\]
constructed in \cite[Section 8.2]{BLL} by sending a class of a smooth projective variety $X/k$
to the dg-enhancement $I(X)$ of the derived category of coherent sheaves on $X$.

The motivic measure $\mu_{dg}$ is a ring homomorphism, but it is \emph{not} a $\lambda$-ring homomorphism.
To illustrate the issue, consider the case of a point $X = \Spec(k)$.
Then the motivic zeta-function of $X$ is
\[
Z_{mot}(\Spec(k), t) = \frac1{1-t}, 
\]
whereas the categorical zeta-function of $\CC = \mu_{dg}(\Spec(k)) = I(Vect/k)$ counts irreducible representations of 
symmetric groups (recall that $char(k) = 0)$, i.e. is the partition function
\be{zeta-pt}
Z_{cat}(Vect/k, t) = \prod_{k \ge 1} \frac1{1-t^k}.
\ee

Similar relation holds in higher generality:

\begin{theorem}
Let $X$ be a smooth projective variety. Assume that either $dim(X) \le 2$ or that the
class of $X$ in the Grothendieck ring of varieties is a polynomial in $[\A^1]$.
Then there is a following relation between the zeta-functions: 
\be{zeta-exp}
Z_{mot}(\mu_{dg}(X),t) = \prod_{k \ge 1} \mu_{dg} (Z_{mot}(X,t^k)). 
\ee
\end{theorem}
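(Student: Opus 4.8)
The plan is to read both sides of \eqref{zeta-exp} as the total symmetric power $\sigma_t(b)=\sum_{n\ge0}[\Sym^n(b)]\,t^n$ taken in the appropriate $\lambda$-ring, so that $Z_{cat}(\CC,t)=\sigma_t([\CC])$ in $K_0(dg-cat/k)$ and $Z_{mot}(X,t)=\sigma_t([X])$ in $K_0(Var/k)$. The first observation is that both sides of \eqref{zeta-exp}, read as functions of $[X]$, are group homomorphisms from $(K_0(Var/k),+)$ to the multiplicative group $1+t\,K_0(dg-cat/k)[[t]]$: the left-hand side is $\sigma_t(\mu_{dg}([X]))$, where $\mu_{dg}$ is additive and $\sigma_t$ carries sums to products, while the right-hand side is $\prod_{k\ge1}\mu_{dg}(\sigma_{t^k}([X]))$, where each $\sigma_{t^k}$ carries sums to products and $\mu_{dg}$ is a continuous ring homomorphism. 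Hence the set of classes satisfying \eqref{zeta-exp} is a subgroup of $K_0(Var/k)$, and it suffices to verify the identity on generators.

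This disposes of the case in which $[X]$ is a polynomial in $\L=[\A^1]$: such classes lie in the subgroup generated by the $[\P^m]$, so I only need \eqref{zeta-exp} for $X=\P^m$. On the categorical side, Beilinson's full exceptional collection $\OO,\dots,\OO(m)$ splits $\mu_{dg}(\P^m)$ into $m+1$ copies of the unit, so by \eqref{zeta-pt} and \eqref{mult-cat} the left-hand side equals $\left(\prod_{k\ge1}(1-t^k)^{-1}\right)^{m+1}$. On the motivic side, Kapranov's computation $Z_{mot}(\P^m,t)=\prod_{i=0}^m(1-\L^i t)^{-1}$ together with $\mu_{dg}(\L)=1$ (from $[\P^1]=1+\L$ and the splitting of $\mu_{dg}(\P^1)$ into two units) gives $\mu_{dg}(Z_{mot}(\P^m,t^k))=(1-t^k)^{-(m+1)}$; taking the product over $k$ reproduces the same series. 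So both sides agree for all projective spaces, hence for all polynomial-in-$\L$ classes.

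For $\dim(X)\le2$ I would argue geometrically. The structural input is that the Ganter--Kapranov power $\Sym^n(\mu_{dg}(X))$ is modelled by (an enhancement of) the $S_n$-equivariant derived category $D^b_{S_n}(X^n)$; note that for $X=\Spec(k)$ this returns $\mathrm{Rep}(S_n)$ and recovers \eqref{zeta-pt}. Expanding the right-hand side of \eqref{zeta-exp} as a product over $k$ and collecting the coefficient of $t^n$ shows that \eqref{zeta-exp} is equivalent to the family of class identities $[\Sym^n(\mu_{dg}(X))]=\sum_{\lambda\vdash n}\prod_{k}\mu_{dg}([\Sym^{m_k(\lambda)}(X)])$, where $\lambda=(1^{m_1}2^{m_2}\cdots)$. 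I would deduce these from a motivic McKay correspondence for $\Sym^n(X)=X^n/S_n$: the stratum of $D^b_{S_n}(X^n)$ attached to cycle type $\lambda$ has fixed locus $\prod_k\Sym^{m_k}(X)$ and contributes $\prod_k\mu_{dg}([\Sym^{m_k}(X)])$, the Tate twists from the normal directions disappearing because $\mu_{dg}(\L)=1$. For surfaces this is supplied by the Bridgeland--King--Reid--Haiman equivalence $D^b_{S_n}(X^n)\simeq D^b(\mathrm{Hilb}^n(X))$ together with the motivic G\"ottsche formula $\sum_n[\mathrm{Hilb}^n(X)]\,t^n=\prod_{m\ge1}Z_{mot}(X,\L^{m-1}t^m)$, to which one applies $\mu_{dg}$; for curves, where $\mathrm{Hilb}^n(X)=\Sym^n(X)$ is already smooth, the decomposition is read off directly from the fixed-point strata, the case $n=2$ reading $[\Sym^2(\mu_{dg}(X))]=\mu_{dg}([\Sym^2 X])+\mu_{dg}([X])$ with the second term coming from the diagonal.

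The step I expect to be the main obstacle is exactly this last decomposition: proving that $\Sym^n(\mu_{dg}(X))$ is the equivariant category $D^b_{S_n}(X^n)$ and that, in $K_0(dg-cat/k)$, the latter breaks into the advertised symmetric-product pieces with every normal-direction twist annihilated by $\mu_{dg}$. This is precisely where $\dim(X)\le2$ enters, since it is the range in which the crepant resolution $\mathrm{Hilb}^n(X)\to\Sym^n(X)$ exists and the associated McKay correspondence holds cleanly, so that no correction terms beyond the $\Sym^{m_k}(X)$ survive; in higher dimension such a clean decomposition is not available, which is the reason for the hypothesis. This is likely the technical point deferred to the sequel.
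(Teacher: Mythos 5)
Your route is, in substance, the paper's own. For the polynomial-in-$\L$ case the paper's entire argument is the observation $\mu_{dg}[\A^k]=\mu_{dg}[\A^1]^k=1$ together with the point computation (\ref{zeta-pt}); your subgroup-of-homomorphisms formulation (valid because the paper asserts that the Ganter--Kapranov operations descend to $\lambda$-operations on $K_0(dg-cat/k)$, so $\sigma_t$ is additive-to-multiplicative there, and (\ref{mult-kap}) gives the same on $K_0(Var/k)$) is just a cleaner write-up of that one line, with the verification done on the generators $[\P^m]$. Your surface case --- identifying $\Sym^n(\mu_{dg}(X))$ with the equivariant category of $X^n$, applying the Bridgeland--King--Reid/Haiman equivalence with $D^b(\mathrm{Hilb}^n(X))$, then G\"ottsche's motivic formula with the Tate twists killed by $\mu_{dg}(\L)=1$ --- is exactly the paper's cited combination of \cite{BKR}, \cite{H}, \cite{G}.

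The genuine gap is the curve case. You claim that because $\mathrm{Hilb}^n(C)=\Sym^n(C)$ is smooth, the required identity $[\Sym^n(\mu_{dg}(C))]=\sum_{\lambda\vdash n}\prod_k\mu_{dg}([\Sym^{m_k}(C)])$ can be ``read off directly from the fixed-point strata.'' It cannot. For $n\ge 2$ there is no equivalence $D^b_{S_n}(C^n)\simeq D^b(\Sym^n C)$ (your own $n=2$ example exhibits the extra factor $D^b(C)$; Hochschild homology distinguishes the two sides), and no McKay-type argument \`a la \cite{BKR} is available, since the crepant resolution is the identity map. What is needed is an actual semiorthogonal decomposition of $D^b_{S_n}(C^n)$ with one piece $D^b(\Sym^{m_1}C\times\Sym^{m_2}C\times\cdots)$ for each partition $(1^{m_1}2^{m_2}\cdots)$ of $n$. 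The stratification of $C^n/S_n$ by cycle type suggests the list of pieces, but it supplies neither the embedding functors nor semiorthogonality; derived categories do not decompose along closed-open decompositions the way classes in $K_0(Var/k)$ do --- that failure is the very reason $\mu_{dg}$ has to be constructed through Bittner's presentation --- so asserting the decomposition from the strata begs the question. This decomposition is precisely Theorem 6.3.1 of Polishchuk--van den Bergh \cite{PB}, which is what the paper invokes for curves. You correctly flag this step as the main obstacle, but it is not a routine verification one may defer: it is the substantive input, and without it (or an independent proof of the SOD) your argument covers only the polynomial and surface cases.
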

\begin{proof}
The case of a point $X = \Spec(k)$ is (\ref{zeta-pt}).
Since 
\[
\mu_{dg}[\A^k] = \mu_{dg}[\A^1]^k = 1 
\]
the case of $[X]$ being a polynomial in $[\A^1]$ follows.

The case of a curve follows from the work of Polishchuk and van den Bergh \cite[Theorem 6.3.1]{PB}.
Finally the surface case is a combination of results of G\"ottsche \cite{G}, 
McKay correspondence of Bridgeland-King-Reid \cite{BKR} and Haiman \cite{H}.
\end{proof}

\begin{remark}
Since $Z_{cat}(X \times \P^n, t) = Z_{cat}(X, t)^{n+1}$, if the corresponding expansion 
(\ref{zeta-exp})
is known for all smooth projective varieties of
dimension $d$, then it holds for all dimensions $d' \le d$ at least in $K_0(dg-cat/k)[\frac1{d!}]$.

This way results of G\"ottsche \cite{G}, 
McKay correspondence of Bridgeland-King-Reid \cite{BKR} and Haiman \cite{H} 
which are used to prove (\ref{zeta-exp}) for surfaces 
imply a (very) weak version of \cite[Theorem 6.3.1]{PB}:
\[
[I_{S_n}(C^n)] = \sum_\alpha [I(C^{(\alpha)})] \in K_0(dg-cat/k) [\frac12]
\]
where $C$ is a smooth projective curve, the summation goes over partitions
$\alpha$ of $n$ and $C^{(\alpha)} = C^{(\alpha_1)} \times \dots C^{(\alpha_n)}$
for $\alpha = (1^{\alpha_1}, \dots, n^{\alpha_n})$.
\end{remark}

\begin{remark}
The transformation 
\[
f(t) \mapsto \prod_{k \ge 1} f(X,t^k) 
\]
of power series with constant term $1$ is invertible. Its inverse is given by
\[
g(t) \mapsto \prod_{k \ge 1} g(X,t^k)^{\mu(k)}
\]
where $\mu(k)$ is the M\"obius function. This fact follows easily from the M\"obius inversion formula.
\end{remark}

\providecommand{\arxiv}[1]{\href{http://arxiv.org/abs/#1}{\tt arXiv:#1}}

\newpage

\address{
{\bf Sergey Galkin}\\
National Research University Higher School of Economics (HSE)\\
Faculty of Mathematics and Laboratory of Algebraic Geometry\\
7 Vavilova Str. \\
117312, Moscow, Russia\\
e-mail: {\tt Sergey.Galkin@phystech.edu}
}

\medskip

\address{
{\bf Evgeny Shinder}\\
School of Mathematics and Statistics \\
University of Sheffield \\
The Hicks Building \\
Hounsfield Road \\
Sheffield S3 7RH\\
e-mail: {\tt eugene.shinder@gmail.com}
}

\end{document}